\documentclass[11pt]{article}

\usepackage[a4paper,margin=1in]{geometry}
\usepackage[T1]{fontenc}
\usepackage[utf8]{inputenc}
\usepackage{lmodern}
\usepackage{amsmath,amssymb,amsthm,mathtools}
\usepackage{microtype}
\usepackage{enumitem}
\usepackage{booktabs}
\usepackage[numbers,sort&compress]{natbib}
\usepackage[colorlinks=true,linkcolor=blue,citecolor=blue,urlcolor=blue]{hyperref}

\newtheorem{theorem}{Theorem}[section]
\newtheorem{lemma}[theorem]{Lemma}
\newtheorem{proposition}[theorem]{Proposition}

\newtheorem{conjecture}[theorem]{Conjecture}
\theoremstyle{definition}
\newtheorem{definition}[theorem]{Definition}
\theoremstyle{remark}

\newtheorem{question}[theorem]{Question}

\newcommand{\calF}{\mathcal F}
\newcommand{\calM}{\mathcal M}
\newcommand{\calP}{\mathcal P}
\newcommand{\calU}{\mathcal U}
\newcommand{\calS}{\mathcal S}

\newcommand{\inv}{\operatorname{inv}}

\title{Chain Covers in the Boolean Lattice}
\author{Zolt\'an Lóránt Nagy\thanks{
 E\"otv\"os Lor\'and University, Budapest, Hungary. The author is supported by  the J\'anos Bolyai Research Grant of the Hungarian Academy of Sciences and  by the NRDI EXCELLENCE grant, no.  151504;   	E-mail: {\tt zoltan.lorant.nagy@ttk.elte.hu.}}, \and Balázs Patkós\thanks{Alfr\'ed R\'enyi Institute of Mathematics, Budapest, Hungary and Department of Computer Science and Information Theory, Budapest University of
Technology and Economics, Budapest, Hungary, E-mail: {\tt patkos@renyi.hu}}}
\date{}

\begin{document}
\maketitle

\begin{abstract}
For integers $1\le r\le n+1$, let $N(n,r)$ denote the least number of chains in the Boolean lattice $B_n=2^{[n]}$ that cover every strict $r$-term chain.  The case $r=1$ is the classical chain-decomposition problem and is generalizing Dilworth's theorem and Sperner's theorem.  We study two complementary regimes.  First, when $r>1$ is fixed and $n\to\infty$.
Let   $$M(n,r):= \max_{\substack{
      a_0+\cdots+a_r=n\\
      a_0,a_r\ge 0,\ a_i\ge 1\ (1\le i\le r-1)
   }}
   \binom{n}{a_0,\ldots,a_r}.$$
We prove that lower and upper bounds which differ only by a logarithmic factor:
\[
  M(n,r)\le   N(n,r)\le \left(\frac r2+o(1)\right)\log n\cdot
 M(n,r).
\]

Second, we consider the near-maximal regime $N(n,n-t)$, where $t>0$ is fixed.  We prove 
\[
   N(n,n-t)= (\gamma_{t-1}+o(1))n!,
\]
where $\gamma_d$ is the limit of the density of a minimum size subset of the hypercube $Q_n$ that meets all $d$-dimensional subcube.  
\end{abstract}

\section{Introduction}

Let $P$ be a finite partially ordered set.  A \emph{chain} is a set of pairwise comparable elements, and an \emph{antichain} is a set of pairwise incomparable elements.  Dilworth's theorem states that the minimum number of chains in a decomposition of a finite poset equals the maximum size of an antichain \citep{Dilworth1950}.  In the Boolean lattice $B_n=2^{[n]}$ ordered by inclusion, Sperner's theorem identifies the largest antichains and gives the familiar value
\[
   \binom{n}{\lfloor n/2\rfloor}
\]
for the smallest chain decomposition of $B_n$ \citep{Sperner1928}.

This paper studies a higher-order covering version of the same problem.  Instead of covering elements of the Boolean lattice, we cover short chains by longer chains.

\begin{definition}
Let $C$ and $D$ be chains in $B_n$.  We say that $C$ \emph{covers} $D$ if $D\subseteq C$.  For $1\le r\le n+1$, let $N(n,r)$ be the least number of chains in $B_n$ that cover every strict chain of size $r$.
\end{definition}

Every chain in $B_n$ can be extended to a maximal chain.  Hence, throughout the paper, we may and shall take all covering chains to be maximal chains.  A maximal chain is identified with a permutation $\pi=(\pi_1,\ldots,\pi_n)$ of $[n]$, via
\[
   \varnothing
   \subset \{\pi_1\}
   \subset \{\pi_1,\pi_2\}
   \subset \cdots
   \subset [n].
\]
Thus the problem is equivalently a covering problem on the set of $r$-term chains, where the covering objects are permutations of $[n]$.


The basic obstruction comes from fixing levels of $B_n$.  A strict $r$-term chain
\[
   F_1\subset F_2\subset\cdots\subset F_r
\]
has a \emph{gap vector}
\[
 \textbf{ a}= (a_0,a_1,\ldots,a_r),
\]
where
\[
   a_0=|F_1|,\qquad
   a_i=|F_{i+1}\setminus F_i|\quad (1\le i\le r-1),\qquad
   a_r=n-|F_r|.
\]
The entries sum to $n$.  Conversely, fixing a gap vector fixes the ranks of the chain.  The number of chains with gap vector $(a_0,\ldots,a_r)$ is
\[
   \binom{n}{a_0,a_1,\ldots,a_r}.
\]

Let $\mathcal A_{n,r}$ be the set of admissible gap vectors, namely those with $a_0,a_r\ge0$, $a_i\ge1$ for $1\le i\le r-1$, and $a_0+\cdots+a_r=n$.  Define
\[
   M(n,r):=\max_{\mathbf a\in\mathcal A_{n,r}}\binom{n}{a_0,a_1,\ldots,a_r}.
\]
Except for the harmless endpoint effects when $r$ is very close to $n$, this is the balanced multinomial coefficient, where the $r+1$ parts are as equal as the admissibility constraints allow.  For fixed $r$ and large $n$, it is simply the multinomial coefficient with all $r+1$ parts differing by at most one.

\begin{proposition}[Layered lower bound]\label{prop:layered-lower}
For all $n$ and $r$,
\[
   N(n,r)\ge M(n,r).
\]
\end{proposition}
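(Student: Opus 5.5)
Fix a gap vector $\mathbf a=(a_0,\ldots,a_r)\in\mathcal A_{n,r}$ attaining the maximum in the definition of $M(n,r)$. Its entries determine fixed ranks $k_1<k_2<\cdots<k_r$, namely $k_1=a_0$ and $k_{i+1}=k_i+a_i$; the strictness $k_i<k_{i+1}$ holds because $a_i\ge 1$ for $1\le i\le r-1$. The plan is a pigeonhole argument on this single family of chains. Let $\mathcal C_{\mathbf a}$ be the set of strict $r$-term chains $F_1\subset\cdots\subset F_r$ with $|F_i|=k_i$, that is, the chains with gap vector $\mathbf a$. As observed before the proposition, $|\mathcal C_{\mathbf a}|=\binom{n}{a_0,a_1,\ldots,a_r}=M(n,r)$. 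For a self-contained count, choose $F_1$, then $F_2\setminus F_1$, and so on, up to $F_r\setminus F_{r-1}$; the complement $[n]\setminus F_r$ then has size $a_r$.

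The key step is to show that a single chain covers at most one member of $\mathcal C_{\mathbf a}$. Let $C$ be any chain in $B_n$. It contains at most one set of each size, since two distinct comparable sets have different sizes. If $C$ covers $D=\{F_1,\ldots,F_r\}\in\mathcal C_{\mathbf a}$, then $F_i$ must be the unique element of $C$ of size $k_i$ for each $i$. Therefore $D$ is determined by $C$. Equivalently, if we take covering chains to be maximal and identify them with permutations $\pi$, then $\pi$ covers exactly one chain with gap vector $\mathbf a$, namely the one with $F_i=\{\pi_1,\ldots,\pi_{k_i}\}$.

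From this, any family of chains covering every strict $r$-chain must in particular cover all of $\mathcal C_{\mathbf a}$. Since each chain covers at most one member of $\mathcal C_{\mathbf a}$, the family has at least $|\mathcal C_{\mathbf a}|=M(n,r)$ members, which gives $N(n,r)\ge M(n,r)$.

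There is no real obstacle here. The only point needing care is the boundary bookkeeping: $a_0$ and $a_r$ may be $0$, so that $F_1=\varnothing$ or $F_r=[n]$ is allowed. Meanwhile $a_i\ge1$ for the interior indices ensures the chain is strict, so $\mathcal C_{\mathbf a}$ really consists of admissible $r$-term chains. The multinomial count is valid in all these cases.
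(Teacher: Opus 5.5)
Your proof is correct and is essentially the paper's argument. Both fix a maximizing admissible gap vector, note that there are $M(n,r)$ chains of that type, and observe that a chain contains at most one set of each rank, so it covers at most one of them. Your handling of arbitrary rather than maximal covering chains, and of the boundary cases $a_0=0$ or $a_r=0$, is a harmless refinement of the same pigeonhole.
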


\begin{proof}
Choose an admissible gap vector $(a_0,\ldots,a_r)$ attaining $M(n,r)$, and consider all $r$-chains with these fixed gaps.  Their number is $M(n,r)$.  A maximal chain contains exactly one set of each rank, and hence covers exactly one chain from this fixed gap type.  Therefore at least $M(n,r)$ maximal chains are necessary.
\end{proof}

This lower bound is of the same spirit as several classical extremal results in the Boolean lattice.  Sperner's theorem is the rank-one case: a single middle layer is extremal.  Erd\H{o}s's extension of Sperner's theorem says that the largest family containing no chain of length $r+1$ is the union of the $r$ largest layers \citep{Erdos1945}.  Meshalkin's theorem gives a balanced multinomial bound for antichains of ordered set partitions \citep{Meshalkin1963}.  Katona's theorem for comparable pairs \citep{Katona1973}, the chain-profile-vector theorem of \citet{GerbnerPatkos2008}, and the later solution of Kleitman's chain-minimization conjecture by \citet{Samotij2019} are further examples where extremal or minimizing configurations are governed by layers as close to the middle as possible.

It is therefore natural to ask whether the layered lower bound is asymptotically sharp.

\begin{conjecture}\label{conj:fixed-r}
For every fixed $r\ge 1$,
\[
   N(n,r)=(1+o(1))M(n,r).
\]
\end{conjecture}

The exact equality version is false in general; see Section~\ref{sec:small-values}.  The asymptotic question remains open already for $r=2$. 

Dilworth's theorem has several generalisation. A particularly important higher-order extension is the theorem of Greene and Kleitman \cite{Greene}, which refines Dilworth's theorem by considering the maximum size of a union of $k$ antichains and the corresponding saturated chain partitions.  Our problem is different in nature: instead of covering the elements of a poset, or unions of antichains, we ask for chains which cover all smaller chains of a prescribed length.

\subsection*{Main results}

We prove results in two regimes.

First, for fixed $r$, the layered lower bound is sharp up to a logarithmic factor. It is worth noting that the total number $T(n,r)$ of $r$-chains in $B_n$ satisfies $(r+1)^n-(r+1)r^n\le T(n,r)\le (r+1)^n$. So by Stirling's formula we have $T(n,r)\le M(n,r)\cdot n^{\frac{r+1}{2}}$. Clearly, we have $N(n,r)\le T(n,r)$ by covering the $r$-chains one-by-one. The upper bound of the next theorem could be compared to this trivial upper bound.

\begin{theorem}[Fixed chain length]\label{thm:fixed-r}
For every fixed $r\ge 1$,
\[
   N(n,r)\le
   \left(\frac r2+o(1)\right)\log n\,M(n,r).
\]
\end{theorem}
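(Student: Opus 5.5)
We take a random set of uniformly random permutations and then fix the few uncovered chains greedily; this is the classical Lovász--Stein / greedy set-cover bound. The factor $\frac r2\log n$ should come out as $\ln$ of the ratio between the number of $r$-chains of a typical gap type and the number of chains covered, weighted appropriately; the paper presumably writes $\log$ for the natural logarithm. Concretely, a uniformly random maximal chain $\pi$ covers a fixed $r$-chain $D$ with gap vector $\mathbf a$ with probability exactly $p(\mathbf a)=\binom{n}{a_0,\ldots,a_r}^{-1}$. This holds because $\pi$ meets each rank in exactly one set, and the $r$-chain it induces on the ranks determined by $\mathbf a$ is uniform over the $\binom{n}{a_0,\ldots,a_r}$ chains of that type. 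Hence every $r$-chain has $p(\mathbf a)\ge 1/M(n,r)$.

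We choose $m=\lceil c\,M(n,r)\rceil$ independent uniform permutations. The expected number of uncovered $r$-chains is
\[
\sum_{\mathbf a\in\mathcal A_{n,r}}\binom{n}{a_0,\ldots,a_r}\Bigl(1-\binom{n}{a_0,\ldots,a_r}^{-1}\Bigr)^{m}\le \sum_{\mathbf a}\binom{n}{\mathbf a}e^{-m/\binom{n}{\mathbf a}} .
\]
Each uncovered chain is then extended to a maximal chain individually, so $N(n,r)\le m+\mathbb E[\text{uncovered}]$. The key estimate is that $x e^{-m/x}$ is increasing in $x$, so each term is at most $M e^{-m/M}$, where $M=M(n,r)$. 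There are only $|\mathcal A_{n,r}|\le n^{r}$ gap types, so the error is at most $n^{r}Me^{-c}$. Taking $c=r\ln n$ gives $O(M)$ extra chains, and hence the bound $(r+o(1))\ln n\cdot M$. This is off by a factor of $2$ from the claim.

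To gain the factor $\frac12$, we refine the count of gap types. Only types with $\binom{n}{\mathbf a}\ge M/K$ contribute significantly, since smaller types give terms at most $\binom{n}{\mathbf a}e^{-cK}$, which is negligible in total for $K$ a large constant or a slowly growing function. By the local normal approximation of the multinomial, $\binom{n}{\mathbf a}\approx M\exp\bigl(-\tfrac{r+1}{2n}\sum_i (a_i-\tfrac{n}{r+1})^2\bigr)$. So the relevant types are those with $\mathbf a$ within $O(\sqrt{n})$ of the balanced vector in each of the $r$ free coordinates. There are $\Theta(n^{r/2})$ such types. Splitting dyadically according to the ratio $\binom{n}{\mathbf a}/M$, the number of types with ratio in $[2^{-j-1},2^{-j}]$ is $O((j+1)^{r/2}n^{r/2})$, and their contribution is at most $n^{r/2}(j+1)^{r/2}M2^{-j}e^{-c2^{j}}$. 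Summing over $j$ is dominated by $j=0$. With $c=\frac r2\ln n+\omega(1)$, for example $c=\frac r2\ln n+\ln\ln n$, the total expected error is $o(M)$. Then $N(n,r)\le (\frac r2+o(1))\ln n\cdot M(n,r)$.

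The main obstacle is this counting step: we need a uniform local estimate, valid for fixed $r$ and large $n$, bounding the number of admissible gap vectors with $\binom{n}{\mathbf a}\ge \lambda M$ by $O(n^{r/2}\log(1/\lambda)^{r/2})$. One must also handle the tails, where $\binom{n}{\mathbf a}\le M n^{-r}$ say, by the crude bound $n^r\cdot \max_x x e^{-m/x}$ restricted to $x\le Mn^{-r}$, which is $o(M)$. The local estimate itself I would obtain from Stirling's formula, using the strict log-concavity $\ln\binom{n}{\mathbf a}\le \ln M-\Omega(\|\mathbf a-\mathbf a^*\|^2/n)$ on the simplex. This gives exactly a ball count in the $r$-dimensional lattice of gap vectors.
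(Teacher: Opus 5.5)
Your proof is correct. It rounds the same object the paper rounds: the uniform distribution on maximal chains is the paper's fractional cover $w_C=1/(m_0!\cdots m_r!)=M(n,r)/n!$ normalised to mass one. Your coverage probability $\binom{n}{\mathbf a}^{-1}\ge 1/M(n,r)$ is the paper's inequality $a_0!\cdots a_r!\ge m_0!\cdots m_r!$.

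The difference lies in the rounding and the bookkeeping. The paper discards the sparse types ($M/T_{\mathbf a}>n^{r+2}$), which hold only $O(M/n^2)$ chains in total. It applies Lov\'asz--Stein to the dense types, where only the common set size $s_n$ enters, through $H_{s_n}\approx\ln s_n$. Hence the crude count $s_n=O((n\log n)^{r/2})$ from Pinsker and Stirling, with its additive $O(\log n)$ error, is enough.

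You use random sampling plus alteration and weigh each type by $T_{\mathbf a}e^{-cM/T_{\mathbf a}}$. The exponential handles the sparse types without a threshold. However, the leftover is now \emph{linear} in the number of types with $T_{\mathbf a}\ge M/2$, so with $c=\frac r2\ln n+\ln\ln n$ you genuinely need that number to be $O_r(n^{r/2})$. This count is true, but not via plain Stirling. Also, your inequality $\ln\binom{n}{\mathbf a}\le\ln M-\Omega(\|\mathbf a-\mathbf a^*\|^2/n)$ needs slack near $\mathbf a^*$, since it fails for the other balanced vectors. Comparing $\prod a_i!$ with $\prod m_i!$ factor by factor gives $\ln(M/T_{\mathbf a})\ge c_r(\|\mathbf d\|_2^2-\|\mathbf d\|_1)/n$ for $\mathbf d=\mathbf a-\mathbf m$, which yields your count. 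If you only have the paper's cruder count, $c=\frac r2\ln n+(\frac r2+1)\ln\ln n$ also works.

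What your route buys is self-containedness and a sharper second-order term. Taking $c=\frac r2\ln n+C$ leaves $O(e^{-C}M)$ uncovered chains, so $N(n,r)\le(\frac r2\ln n+O_r(1))M(n,r)$. The paper's $H_{s_n}$, by contrast, carries an extra $\frac r2\ln\ln n$.
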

The proof appears in Section~\ref{sec:fixed}. It is a deterministic fractional-covering argument. One gives all maximal chains equal weight so that the total weight is exactly $M(n,r)$, then rounds the fractional cover after separating gap types according to their distance from the balanced type. The same rounding method also has a uniform form for arbitrary $r=r(n)$: without assuming $r$ fixed, one obtains
\[
N(n,r)\le M(n,r)H_{\binom{n+1}{r}}
\le M(n,r)\left(1+\log\binom{n+1}{r}\right),
\]
where $H_q=1+1/2+\cdots+1/q$ A slightly sharper version, stated in Section~\ref{sec:fixed}, rounds only the dense gap types and covers the sparse remainder individually.


Second, when the chain length is close to maximal, the layered lower bound is not always the correct scale.  Write the chain length as $n-t$, where $t$ is fixed.  For $n$ large enough in terms of $t$, the layered lower bound is exactly
\[
   M(n,n-t)=\frac{n!}{2^{t-1}},
\]
corresponding to the balanced type with $t-1$ gaps of size $2$ and the rest of size $1$.  We connect the problem of determining $N(n,n-t)$ to the problem of finidng the smallest subset of the hypercube hitting all $d$-dimensional subcubes with $d=t-1$.





\begin{theorem}\label{thm:near-maximal-gamma}
Fix $d\ge 1$. Let $g(m,d)$ denote the minimum size of a subset of $Q_m=\{0,1\}^m$ which intersects every $d$-dimensional subcube of $Q_m$, and suppose that
\[
\gamma_d=\lim_{m\to\infty}\frac{g(m,d)}{2^m}
\]
exists. Then
\[
N(n,n-d-1)=(\gamma_d+o(1))n!.
\]
Equivalently, writing $t=d+1$,
\[
N(n,n-t)=(\gamma_{t-1}+o(1))n!
\]
for every fixed $t\ge 2$.
\end{theorem}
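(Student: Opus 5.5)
The plan is to prove matching lower and upper bounds, both by reducing to the hypercube problem in the statement. The lower bound restricts attention to one family of gap types. The upper bound encodes permutations by their inversion tables, so that the covering permutations of any $(n-t)$-chain contain a full subcube of dimension at least $d$.

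\textbf{Lower bound.} Let $t=d+1$ and let $P$ be a covering set of permutations. For simplicity take $n=2m$ even; odd $n$ only needs one unpaired position.
\begin{enumerate}[label=(\roman*)]
\item Pair the positions as $(1,2),(3,4),\ldots,(2m-1,2m)$. For each of the $n!/2^m$ classes of permutations that agree up to swapping within pairs, identify the class with $Q_m$; coordinate $j$ records whether the pair $(2j-1,2j)$ is swapped.
\item Take any $d$ pairs $j_1,\dots,j_d$ and delete the levels $0$, $n$ and $2j_1-1,\ldots,2j_d-1$ from such a maximal chain. What remains is an $(n-d-2)=(n-t-1)$-chain.
\item To get a strict $(n-t)$-chain, I would instead delete one endpoint level only. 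Then exactly $d+1=t$ levels are missing, the $d$ interior ones being pairwise non-adjacent; the edge case of the pair $(1,2)$ is harmless.
\item The permutations covering this chain are exactly those obtained by arbitrary swaps inside the $d$ chosen pairs, i.e.\ a $d$-dimensional subcube of that copy of $Q_m$.
\item Hence $P$ meets every $d$-subcube of each copy, so it has at least $g(m,d)$ elements in each copy. Summing over the copies gives
\[
|P|\ge \frac{n!}{2^m}\,g(m,d)=(\gamma_d-o(1))\,n! .
\]
\end{enumerate}

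\textbf{Upper bound.} Encode a permutation $\pi$ by its inversion table $L_j=|\{k>j:\pi_k<\pi_j\}|$ for $1\le j\le n-1$, and let $x(\pi)\in Q_{n-1}$ be the vector of parities of $L_1,\dots,L_{n-1}$. Take an $(n-t)$-chain with gap vector $\mathbf a$. Its covering permutations are exactly those that list, block after block, the fixed sets $F_1,F_2\setminus F_1,\ldots$, with each block in an arbitrary internal order.
\begin{enumerate}[label=(\roman*)]
\item Split $L_j$ as (inversions inside $j$'s block) plus (inversions with later blocks). The second summand depends only on the chain, not on the internal orders.
\item The internal parts range independently over the inversion tables of the blocks. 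So each non-last position $j$ of a block carries a free coordinate, which can take both parities since it can take the values $0$ and $1$. The remaining coordinates of $x(\pi)$ are then determined by the free ones, because last positions of blocks have internal part $0$.
\item Therefore the set $\{x(\pi)\}$ over covering permutations $\pi$ contains a subcube of dimension $\sum_i(a_i-1)^+$. This is at least $n-(n-t+1)=d$, since there are at most $n-t+1$ nonzero parts.
\item Let $H\subseteq Q_{n-1}$ meet all $d$-subcubes with $|H|=g(n-1,d)$. Then for every $v$ the set $P_v=\{\pi: x(\pi)\in H+v\}$ covers all $(n-t)$-chains, since translates of hitting sets are hitting sets.
\item Under the uniform measure on $S_n$, the $L_j$ are independent and $L_j$ is uniform on $\{0,\dots,n-j\}$. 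So $x(\pi)$ has a product distribution $\mu$ on $Q_{n-1}$; it is not uniform, and $P(x_n=\cdot)$ is badly biased for the last coordinates.
\end{enumerate}

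\textbf{Main obstacle: the bias of $\mu$.} This non-uniformity is the step I expect to need care, and I would handle it by averaging over translates. For any probability measure $\mu$, averaging over a uniformly random $v\in Q_{n-1}$ gives
\[
\mathbb E_v\,\mu(H+v)=\frac{|H|}{2^{n-1}}.
\]
So some $v$ satisfies
\[
|P_v|=n!\,\mu(H+v)\le n!\,\frac{g(n-1,d)}{2^{n-1}}=(\gamma_d+o(1))\,n!.
\]
Together with the lower bound this proves $N(n,n-d-1)=(\gamma_d+o(1))n!$.

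The remaining routine points are to check that the block decomposition of the covering set is exactly right in the endpoint blocks $a_0$ and $a_r$, which are free like the others, and to verify the subcube-dimension count of step (iii) for every admissible gap vector.
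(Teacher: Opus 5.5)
\textbf{The upper bound fails at step (i) as written.} With the position-indexed table $L_j=|\{k>j:\pi_k<\pi_j\}|$, the inter-block summand of $L_j$ is the number of elements of later blocks that are smaller than $\pi_j$. But $\pi_j$ is exactly what the internal order changes, so this summand is not determined by the chain.

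A small example: take $n=3$, $t=2$ and the one-term chain $\{1,3\}$. The covering permutations are $132$ and $312$, with $(L_1,L_2)=(0,1)$ and $(2,0)$ respectively. The inter-block part of $L_1$ is $0$ in one case and $1$ in the other. The coordinate you declare free (position $1$) is constant mod $2$, and the last-position coordinate you declare fixed is the one that varies. So (ii), and your description of the subcube in (iii), are not justified. (The dimension bound in (iii) does in fact remain true for this encoding, but proving it needs a separate inductive argument, not the split you describe.)

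The simple repair, which is what the paper does, is to index by values: $L_x=|\{y<x:\ y \text{ appears after } x\}|$ for $2\le x\le n$.
\begin{itemize}
\item For $x$ in a block $B$, the contribution of later blocks is $|\{y<x:\ y\text{ lies in a later block}\}|$, which depends only on $x$ and the chain.
\item For $B=\{b_1<\cdots<b_s\}$, the internal part of $b_j$ ranges over $\{0,\ldots,j-1\}$, independently for $j=2,\ldots,s$, while that of $b_1$ is $0$.
\end{itemize}
Hence the parity image is a translate of a coordinate subcube of dimension $\sum_i(|B_i|-1)\ge d$. The block minima play the role of your last positions.

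\textbf{The lower bound has a counting slip.} A maximal chain has $n+1$ sets, so deleting levels $0$, $n$ and $2j_1-1,\ldots,2j_d-1$ in (ii) leaves $n-d-1=n-t$ sets. That chain is already the right one: it is the paper's chain with $d$ doubleton gaps and all other gaps singletons, and its covering permutations are exactly your $d$-subcube. The chain you switch to in (iii) has $n-t+1$ sets, and a family covering all $(n-t)$-chains need not cover it. Drop (iii) and use (ii).

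\textbf{Your treatment of the bias is correct and genuinely simpler than the paper's.} With the two corrections above, the proof goes through. The paper averages over coordinate permutations of $H$, which only preserve Hamming layers. It therefore has to show, via a local limit theorem, that the weight of the parity vector is within $o(1)$ of $\mathrm{Bin}(n-1,1/2)$ in total variation. Averaging over translates instead gives $\mathbb{E}_v\,\mu(H+v)=|H|/2^{n-1}$ exactly for every $\mu$. It needs no information about the distribution of the code, and it preserves the hitting property because translates of subcubes are subcubes.
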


The constants $\gamma_d$ are classical objects in hypercube extremal theory.  It is known that $\gamma_1=1/2$ and $\gamma_2=1/3$ \citep{Kostochka1976,JohnsonEntringer1989}.  


We also determine $N(n,n-2)$ exactly.
\begin{proposition}\label{cor:nminus2}
For every $n\ge 3$,
\[
   N(n,n-2)=\frac{n!}{2}.
\]
\end{proposition}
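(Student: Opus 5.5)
The plan is to match the lower bound of Proposition~\ref{prop:layered-lower} with an explicit cover consisting of exactly $n!/2$ maximal chains, namely the even permutations.

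\emph{Lower bound.} By Proposition~\ref{prop:layered-lower} it suffices to show $M(n,n-2)=n!/2$. An admissible gap vector for $r=n-2$ has $r+1=n-1$ entries summing to $n$. The $n-3$ interior entries are each at least $1$. Hence the total excess over the all-ones vector is $n-(n-1)=1$, unless some endpoint entry is $0$, in which case the excess is redistributed to other entries. Concretely, the admissible vectors are of three kinds:
\begin{itemize}[label={}]
\item all entries $1$ except a single entry equal to $2$, with multinomial coefficient $n!/2$;
\item one endpoint equal to $0$ and the excess $2$ spread as two entries equal to $2$, with coefficient $n!/4$;
\item one endpoint equal to $0$ and a single entry equal to $3$, with coefficient $n!/6$;
\end{itemize}
together with similar vectors having both endpoints $0$, which have even smaller coefficients. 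The maximum is therefore $n!/2$, which is attained since $n\ge 3$ guarantees $r\ge 1$. This gives $N(n,n-2)\ge n!/2$.

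\emph{Upper bound.} Take as covering chains the maximal chains corresponding to the $n!/2$ even permutations of $[n]$. Fix a strict chain $F_1\subset\cdots\subset F_{n-2}$ with gap vector $(a_0,\ldots,a_{n-2})$. The permutations whose maximal chain contains it are exactly those that list the elements of each block
\[
F_1,\quad F_2\setminus F_1,\quad \ldots,\quad [n]\setminus F_{n-2}
\]
consecutively and in this block order, with an arbitrary order inside each block. This set is a coset $\sigma Y$ of the Young subgroup $Y=\prod_i S_{a_i}$.

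Since the $n-1$ entries $a_i$ sum to $n$, at least one of them satisfies $a_i\ge 2$. Then $Y$ contains a transposition $\tau$, and $\sigma$ and $\sigma\tau$ have opposite parities. So the coset $\sigma Y$ contains an even permutation, i.e.\ the chain is covered. Hence $N(n,n-2)\le n!/2$.

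There is no real obstacle here. The only point needing care is the enumeration of admissible gap vectors to confirm that no type has a coefficient exceeding $n!/2$; this reduces to noting that every admissible type has a largest part of size at least $2$.
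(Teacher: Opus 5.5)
Your proof is correct and follows the paper's argument. The lower bound is the layered bound with $M(n,n-2)=n!/2$, and the upper bound takes one parity class of permutations, which is the $t=2$ case of the inversion-number construction in Theorem~\ref{thm:mod-upper-proof}; your transposition-in-the-Young-subgroup step is just a rephrasing of the paper's observation that the extensions realize at least two consecutive inversion numbers.
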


The paper is organized as follows. We give an upper bound on $N(n,r)$ in the case when $r\ll n$ and prove Theorem \ref{thm:fixed-r} in
Section~\ref{sec:fixed}, using fractional covers. We also discuss the case $r=2$ in detail. Then we  discuss the case of $N(n,n-t)$ when $t$ is small and prove Theorem \ref{thm:near-maximal-gamma} and Proposition \ref{cor:nminus2} in Section~\ref{sec:nearmax} by making connection to the vertex Turán number of subcubes in a large dimensional hypercube and by partitioning the permutations by their inverson numbers. Finally, we pose an open problem in Section \ref{sec:open}.

\section{Fixed chain length}\label{sec:fixed}

This section proves Theorem~\ref{thm:fixed-r}.  We begin with the set-cover rounding theorem used in the proof.

\subsection{Lov\'asz--Stein fractional cover rounding}

We use the following standard form of the Lov\'asz--Stein theorem \citep{Johnson1974,Stein1974,Lovasz1975}.

\begin{theorem}[Lov\'asz--Stein]\label{thm:LS}
Let $\calU$ be a finite universe and let $\calS\subseteq 2^{\calU}$ be a family of subsets covering $\calU$.  Suppose that $(w_S)_{S\in\calS}$ is a fractional cover, that is,
\[
   w_S\ge 0,
   \qquad
   \sum_{S\ni u}w_S\ge 1
   \quad\text{for every }u\in\calU.
\]
If $\tau^*=\sum_{S\in\calS}w_S$ and each $S\in\calS$ has size at most $q$, then $\calU$ has an integral cover using at most
\[
   H_q\tau^*
\]
sets from $\calS$, where $H_q=1+1/2+\cdots+1/q\le 1+\log q$.
\end{theorem}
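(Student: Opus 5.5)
This is the standard greedy analysis of set cover (Johnson, Lovász, Stein), so I would run the greedy algorithm and compare its cost to the fractional cover by charging. At each step, pick a set $S\in\calS$ that covers the largest number of still-uncovered elements. Since $\calS$ covers $\calU$, the process ends with an integral cover. Suppose that when $u$ becomes covered, the chosen set covers $k$ new elements. Give $u$ the price $1/k$. The number of sets chosen then equals the total price $\sum_{u\in\calU}\mathrm{price}(u)$.

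The key step is a per-set bound: for every $S\in\calS$,
\[
\sum_{u\in S}\mathrm{price}(u)\le H_{|S|}\le H_q .
\]
To see this, list the elements of $S$ in the order greedy covers them, breaking ties within one step arbitrarily, as $u_1,\dots,u_s$ with $s=|S|\le q$. Just before $u_j$ is covered, at least $s-j+1$ elements of $S$ are still uncovered. So $S$ itself was available with at least that many new elements, and the greedy choice covered at least $s-j+1$ new elements. Hence $\mathrm{price}(u_j)\le 1/(s-j+1)$, and summing over $j$ gives $H_s$.

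Now I bring in the fractional cover. Since $\sum_{S\ni u}w_S\ge1$ for each $u$,
\[
\#\text{greedy sets}=\sum_{u}\mathrm{price}(u)\le\sum_u\mathrm{price}(u)\sum_{S\ni u}w_S=\sum_S w_S\sum_{u\in S}\mathrm{price}(u)\le H_q\sum_S w_S=H_q\tau^* .
\]
The inequality $H_q\le1+\log q$ is the standard integral comparison $\sum_{i=2}^q 1/i\le\int_1^q dx/x$.

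The only step needing care is the ordering argument for the per-set bound, in particular making sure that ties among elements covered in the same step are handled by an arbitrary ordering. Since such elements share the same price, the bound $\mathrm{price}(u_j)\le1/(s-j+1)$ still holds for each of them.
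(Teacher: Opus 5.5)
Your greedy-with-prices argument is correct and complete: the per-set bound $\sum_{u\in S}\mathrm{price}(u)\le H_{|S|}$, combined with the double counting against the fractional weights, gives exactly the bound $H_q\tau^*$. The paper does not prove this theorem; it only cites Johnson, Stein and Lov\'asz, and your proof is the standard greedy argument found in those references.
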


\subsection{Gap types and dense types}

Fix $r\in \mathbb{Z}^+$.  For a gap vector
\[
   \mathbf a=(a_0,a_1,\ldots,a_r),
   \qquad a_0+\cdots+a_r=n,
\]
let $\calP_{\mathbf a}$ be the set of strict $r$-chains with this gap vector and write
\[
   T_{\mathbf a}:=|\calP_{\mathbf a}|=\binom{n}{a_0,a_1,\ldots,a_r}.
\]
Let $\mathbf m=(m_0,
\ldots,m_r)$ be a balanced vector, so that
\[
   M(n,r)=\binom{n}{m_0,
\ldots,m_r}.
\]
Define
\[
   \rho(\mathbf a):=\frac{M(n,r)}{T_{\mathbf a}}.
\]
We call $\mathbf a$ \emph{dense} if
\[
   \rho(\mathbf a)\le n^{r+2},
\]
and \emph{sparse} otherwise.

The following estimate says that there are few dense types.

\begin{lemma}\label{lem:dense-general}
For fixed $r$, the number of dense gap types is
\[
   O_r\bigl((n\log n)^{r/2}\bigr).
\]
\end{lemma}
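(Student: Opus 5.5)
We plan to show that a dense type $\mathbf a$ must lie close to the balanced vector $\mathbf m$: every coordinate satisfies $|a_i-n/(r+1)|=O_r(\sqrt{n\log n})$. Once this holds, the count follows immediately. The vector $\mathbf a$ is determined by $a_0,\ldots,a_{r-1}$, since $a_r$ is fixed by the sum. Each of these $r$ free coordinates ranges over an interval of length $O_r(\sqrt{n\log n})$, so there are at most $O_r((n\log n)^{r/2})$ dense types.

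The main step is a lower bound on $\rho(\mathbf a)$ in terms of the deviation $\delta_i=a_i-n/(r+1)$. First we dispose of types where some $a_i$ is very small or very large, say $\min_i a_i\le n/(2(r+1))$. In that case Stirling's formula, or the entropy bound $\binom{n}{\mathbf a}\le e^{nH(\mathbf a/n)}$, shows that $T_{\mathbf a}\le (r+1)^n e^{-cn}$ for some constant $c=c_r>0$. Meanwhile $M(n,r)\ge (r+1)^n/n^{O(r)}$, because $M(n,r)$ is at least the average of the $\binom{n+r}{r}$ multinomials, which sum to at least $(r+1)^n$ up to the minor correction for interior zeros. Hence $\rho(\mathbf a)$ is exponentially large, and such types are sparse.

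For the remaining types every $a_i=\Theta(n)$. We compare $T_{\mathbf a}$ with $M(n,r)$ by moving units between coordinates. The ratio of multinomials under the transfer $a_i\to a_i+1$, $a_j\to a_j-1$ equals $a_j/(a_i+1)$, and this is at most $1-\Omega((a_i-a_j)/n)$ when $a_i\ge a_j$. Alternatively, one can use the log-concavity estimate coming from Stirling:
\[
\log\frac{M(n,r)}{T_{\mathbf a}}=\sum_i\bigl(\log a_i!-\log m_i!\bigr)\ge \frac{c}{n}\sum_i\delta_i^2-O_r(\log n),
\]
which uses $\log k!=k\log k-k+O(\log k)$, convexity of $x\log x$ with second derivative $1/x\ge 1/n$, and $\sum_i\delta_i=0$. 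The bound we aim for is therefore $\rho(\mathbf a)\ge \exp\bigl(c_r\sum_i\delta_i^2/n\bigr)n^{-O_r(1)}$.

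Density requires $\rho(\mathbf a)\le n^{r+2}$, which then forces $\sum_i\delta_i^2\le C_r\, n\log n$, so every $|\delta_i|\le \sqrt{C_r n\log n}$, as needed. The only delicate point is handling the $O(\log n)$ Stirling error terms uniformly while keeping the quadratic lower bound with an explicit constant. Since the target only needs $O_r$ precision, this should go through with crude constants. The boundary cases $a_0=0$ or $a_r=0$ are already covered by the first case.
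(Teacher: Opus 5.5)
Your proposal is correct and follows essentially the paper's argument: Stirling's formula gives $\log\bigl(M(n,r)/T_{\mathbf a}\bigr)\ge \frac{c}{n}\sum_i\delta_i^2-O_r(\log n)$, so density confines $\mathbf a$ to a ball of radius $O_r(\sqrt{n\log n})$ around the balanced vector, and that ball contains $O_r((n\log n)^{r/2})$ lattice points. The only differences are minor. The paper gets the quadratic bound from relative entropy and Pinsker's inequality, whereas you use strong convexity of $x\log x$ directly, which gives the same $\ell_2$ estimate. Also, since $\log\binom{n}{a_0,\ldots,a_r}=nH(\mathbf a/n)+O_r(\log n)$ holds uniformly in $\mathbf a$, your separate case for types with a small coordinate is harmless but unnecessary.
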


\begin{proof}
By Stirling's formula,
\[
   \log\binom{n}{a_0,
\ldots,a_r}=nH(p_0,
\ldots,p_r)+O_r(\log n),
\]
where $p_i=a_i/n$ and $H(\textbf{p})=-\sum_i p_i\log p_i$. Similarly, since \(\mathbf m\) is balanced,
$\log M(n,r)=n\log(r+1)+O_r(\log n).$
Let
\[
   \textbf{u}=\left(\frac1{r+1},\ldots,\frac1{r+1}\right).
\]
Then
\[
   \log(r+1)-H(\textbf{p})
   =
   D(\textbf{p}\|\textbf{u}),
\]
where \(D(\cdot\|\cdot)\) denotes the relative entropy (or Kullback-Leibler divergence). Pinsker's inequality claims  that \[
   D(\textbf{p}\|\textbf{u})
   \ge \frac12\|\textbf{p}-\textbf{u}\|^2.
\]
Consequently
\[
   \log(r+1)-H(p_0,\ldots,p_r)
   \ge
   \frac12\sum_{i=0}^r
   \left(p_i-\frac1{r+1}\right)^2 .
\]

Therefore
\[
   \log\frac{M(n,r)}{T_{\mathbf a}}
   \ge
   \frac n2\sum_{i=0}^r
   \left(p_i-\frac1{r+1}\right)^2
   -O_r(\log n).
\]

If $\mathbf a$ is dense, then
\[
   \log\frac{M(n,r)}{T_{\mathbf a}}
   \le (r+2)\log n.
\]
Combining this with the entropy estimate gives
\[
   \sum_{i=0}^r\left(a_i-\frac{n}{r+1}\right)^2
   =O_r(n\log n).
\]
The integer solutions to $a_0+\cdots+a_r=n$ in this ball form an $r$-dimensional lattice set of size $O_r((n\log n)^{r/2})$.
\end{proof}

\subsection{The fractional cover}

Let $\calM$ be the set of all maximal chains of $B_n$.  Thus $|\calM|=n!$.  Assign every maximal chain the weight
\[
   w_C:=\frac1{m_0!m_1!\cdots m_r!}.
\]
The total weight is
\[
   \sum_{C\in\calM}w_C
   =\frac{n!}{m_0!m_1!\cdots m_r!}
   =M(n,r).
\]

We claim that this is a fractional cover of every strict $r$-chain.  Indeed, let $F_1\subset\cdots\subset F_r$ have gap vector $\mathbf a=(a_0,\ldots,a_r)$.  A maximal chain contains it precisely when its corresponding permutation orders the elements inside each gap in the prescribed block order.  Hence the number of maximal chains extending it is
\[
   a_0!a_1!\cdots a_r!.
\]
The total weight of the maximal chains containing this $r$-chain is therefore
\[
   \frac{a_0!a_1!\cdots a_r!}{m_0!m_1!\cdots m_r!}.
\]
Since $\mathbf m$ maximizes the multinomial coefficient among admissible gap vectors,
\[
   \binom{n}{a_0,
\ldots,a_r}
   \le
   \binom{n}{m_0,
\ldots,m_r},
\]
which is equivalent to
\[
   a_0!a_1!\cdots a_r!
   \ge
   m_0!m_1!\cdots m_r!.
\]
Thus every $r$-chain receives fractional coverage at least one.

\subsection{Rounding dense types and discarding sparse types}

Let $\calU_D$ be the set of all strict $r$-chains of dense gap type.  The fractional cover above has total weight $M(n,r)$ on $\calU_D$.

A maximal chain contains exactly  one $r$-chain of each fixed gap type: the gap vector fixes the ranks of the $r$ members, and a maximal chain has exactly one set of each rank.  Hence a maximal chain contains exactly
\[
   s_n=O_r((n\log n)^{r/2})
\]
dense-type $r$-chains.  By Theorem~\ref{thm:LS}, the dense part can be covered using at most
\[
   H_{s_n}M(n,r)
\]
maximal chains.  Since
\[
   H_{s_n}=\log s_n+O(1)
   =\left(\frac r2+o(1)\right)\log n,
\]
the dense types are covered by at most
\[
   \left(\frac r2+o(1)\right)\log n\,M(n,r)
\]
maximal chains.

It remains to cover sparse types.  There are $O_r(n^r)$ possible gap types.  If $\mathbf a$ is sparse, then
\[
   T_{\mathbf a}<\frac{M(n,r)}{n^{r+2}}.
\]
Thus the total number of sparse $r$-chains is at most
\[
   O_r(n^r)\frac{M(n,r)}{n^{r+2}}
   =O_r\left(\frac{M(n,r)}{n^2}\right)
   =o(M(n,r)).
\]
Cover these sparse chains one by one by arbitrary maximal extensions.  This costs only $o(M(n,r))$ additional maximal chains, which is negligible relative to the logarithmic main term.  This proves Theorem~\ref{thm:fixed-r}.

\subsection{A uniform version of the rounding argument}\label{subsec:uniform-rounding}

The preceding proof was optimized for fixed \(r\), where the number of dense gap types can be estimated by a Gaussian bound around the balanced multinomial coefficient. The underlying fractional-covering argument, however, does not require \(r\) to be fixed. For arbitrary \(1\le r\le n+1\), a maximal chain contains exactly  one \(r\)-chain of each rank pattern, and there are exactly \(\binom{n+1}{r}\) possible rank patterns. Applying Theorem~\ref{thm:LS} directly to the fractional cover from the previous subsection therefore gives the uniform estimate
\[
N(n,r)\le M(n,r)H_{\binom{n+1}{r}}
\le M(n,r)\left(1+\log\binom{n+1}{r}\right).
\]
Equivalently,
\[
N(n,r)\le M(n,r)\left(1+r\log\frac{e(n+1)}{r}\right),
\]
which is meaningful for every function \(r=r(n)<n\).

One may also keep the dense--sparse refinement in this variable-\(r\) setting. Given \(A>1\), let \(D_{n,r}(A)\) be the number of gap types \(\mathbf a\) for which \(M(n,r)/T_{\mathbf a}\le A\). Rounding only these dense types and covering all remaining chains one by one gives
\[
N(n,r)
\le M(n,r)H_{D_{n,r}(A)}
+ \frac{\binom{n+1}{r}}{A}M(n,r).
\]
For fixed \(r\), choosing \(A=n^{r+2}\) and estimating \(D_{n,r}(A)\) by Lemma~\ref{lem:dense-general} recovers Theorem~\ref{thm:fixed-r}. For growing \(r\), this form is often the more appropriate statement, since the quality of the bound is governed by the number of near-balanced gap types rather than by a fixed-dimensional Gaussian estimate.

\subsection{Small values cases}\label{sec:small-values}

The layered lower bound is the right first obstruction, but it is not an exact formula for all finite $n$ and $r$.  The first exception for two-term chains is
\[
   M(5,2)=\binom{5}{1,2,2}=30,
   \qquad
   N(5,2)=31.
\]
For $r=2$, exact computations via exhaustive search give
\[
\begin{array}{c|ccccccccc}
 n        &1&2&3&4&5&6&7&8&9\\
\hline
 M(n,2)   &1&2&6&12&30&90&210&560&1680\\
 N(n,2)   &1&2&6&12&31&90&210&560&1680.
\end{array}
\]
Thus equality fails at $n=5$ but holds again for $6\le n\le 9$.

A second failure occurs in the near-maximal diagonal:
\[
   M(6,3)=\binom{6}{1,1,2,2}=180,
   \qquad
   N(6,3)>180.
\]
These computations are not used in the proof of Theorem~\ref{thm:fixed-r}.  They show, however, that the lower bound of Proposition \ref{prop:layered-lower} on $N(n,k)$ can not be attained for all pairs $n,k$, thus the error term in Conjecture~\ref{conj:fixed-r} is necessary in general. 

\section{Near-maximal chains}\label{sec:nearmax}

We now study $N(n,n-t)$ for fixed $t$.  The chain length is close to maximal, and the permutation model becomes especially useful. The main goal of this section is to prove Theorem \ref{thm:near-maximal-gamma}, but first we prove a weaker bound that gives an exact upper bound for the case $t=2$.

\subsection{An inversion-number construction}

For a permutation $\pi\in S_n$, let $\inv(\pi)$ denote its inversion number.  

\begin{theorem}\label{thm:mod-upper-proof}
For every $t\ge 1$ and every $n> t$,
\[
   N(n,n-t)\le \frac{n!}{t}.
\]
\end{theorem}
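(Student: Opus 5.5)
The plan is to take the covering family to be a single residue class of the inversion number modulo $t$. For each residue $c\in\{0,\ldots,t-1\}$, let
\[
   \Pi_c=\{\pi\in S_n:\ \inv(\pi)\equiv c \pmod t\}.
\]
I will show that each $\Pi_c$ covers every strict $(n-t)$-chain. Averaging over $c$ then finishes the proof: the classes partition $S_n$, so some class has size at most $n!/t$.

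\textbf{Step 1: the extensions of a fixed chain.} An $(n-t)$-chain $F_1\subset\cdots\subset F_{n-t}$ has a gap vector $(a_0,\ldots,a_{n-t})$ with sum $n$ and interior entries at least $1$. Its maximal extensions are exactly the permutations obtained by fixing the block sequence (the blocks being $F_1$, the successive differences, and $[n]\setminus F_{n-t}$) and ordering each block $G_i$, $|G_i|=a_i$, arbitrarily. Inversions between distinct blocks are determined by the chain, so they contribute a constant $K$. Hence
\[
   \inv(\pi)=K+\sum_i \inv(\sigma_i),
\]
where $\sigma_i$ is the internal order of block $G_i$, and each $\inv(\sigma_i)$ ranges over $\{0,\ldots,\binom{a_i}{2}\}$. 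The achievable inversion numbers therefore form the full integer interval
\[
   \Bigl[K,\;K+\sum_i\tbinom{a_i}{2}\Bigr].
\]

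\textbf{Step 2: the interval is long enough.} It suffices to show $\sum_i\binom{a_i}{2}\ge t-1$, because an interval containing at least $t$ consecutive integers meets every residue class mod $t$. Write $E=\sum_i (a_i-1)$. Since there are $n-t+1$ blocks and $\sum a_i=n$, we get $E=t-1$, except that the end blocks may have $a_0=0$ or $a_{n-t}=0$, each of which raises $E$ by one, so $E\ge t-1$. Moreover $\binom{a}{2}\ge a-1$ for every $a\ge1$, and blocks with $a=0$ contribute $0$. The only point needing care is these empty end blocks. Letting $S$ be the set of nonempty blocks, there are at most $n-t+1$ of them and their sizes sum to $n$, so
\[
   \sum_{i\in S}(a_i-1)\ge n-(n-t+1)=t-1,
\]
and therefore $\sum_i\binom{a_i}{2}\ge t-1$.

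\textbf{Step 3: conclusion.} By Steps 1 and 2, every $\Pi_c$ covers every $(n-t)$-chain. Choosing the smallest class gives $N(n,n-t)\le n!/t$.

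The main obstacle is Step 1, specifically the claim that the achievable inversion totals have no gaps. This follows because for a single block every value between $0$ and $\binom{a}{2}$ is attained, and a sum of full integer intervals is again a full interval. The hypothesis $n>t$ ensures the chain length $n-t$ is at least $1$, so the statement makes sense.
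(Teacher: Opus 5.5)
Your proposal is correct and is essentially the paper's proof: partition $S_n$ by $\inv(\pi)\bmod t$, observe that the maximal extensions of a fixed $(n-t)$-chain realize a full interval of inversion numbers of length $\sum_i\binom{a_i}{2}\ge\sum_{a_i>0}(a_i-1)\ge t-1$, and take the smallest residue class. The one blemish is your aside that empty end blocks ``raise $E$ by one''. Summed over all blocks one has $E=t-1$ exactly, and it is the sum over nonempty blocks that increases. Your subsequent argument restricted to nonempty blocks is the correct one and matches the paper's.
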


\begin{proof}
For \(s\in\{0,\ldots,t-1\}\), let
\[
   \mathcal F_s
   =
   \{\pi\in S_n:\operatorname{inv}(\pi)\equiv s\pmod t\}.
\]
Choose $s$ such that $|F_s|\le \frac{n!}{t}$. Such $s$ exists by averaging. 
Select all maximal chains corresponding to permutations $\pi\in S_n$ with
\[
   \inv(\pi)\equiv s\pmod t,
\]


It remains to show that the selected maximal chains cover every $(n-t)$-term chain.  Let
\[
  F_1\subset\cdots\subset F_{n-t}
\]
be such a chain, with gap sizes
\[
   a_0,a_1,\ldots,a_{n-t}.
\]
Thus there are $n-t+1$ gaps and their sizes sum to $n$.  A maximal extension is obtained by ordering the elements inside each gap.  The inversions between different gaps are fixed; only the internal inversion numbers can vary.

For a gap of size $a$, the possible internal inversion numbers are all integers from $0$ to $\binom a2$.  Therefore the possible total internal inversion numbers contain every integer from $0$ to
\[
   L:=\sum_i\binom{a_i}{2}.
\]
We have
\[
   L\ge \sum_{a_i>0}(a_i-1)=n-h,
\]
where $h$ is the number of nonempty gaps.  Since $h\le n-t+1$, it follows that
\[
   L\ge t-1.
\]
Thus the maximal extensions of the given chain realize $t$ consecutive inversion-number values modulo $t$, and hence all residue classes modulo $t$.  In particular, one extension has inversion number $s\pmod t$.
\end{proof}

\begin{proof}[Proof of Proposition \ref{cor:nminus2}]
As the layered lower bound on $N(n,n-t)$ is $M(n,n-t)=\frac{n!}{2^{t-1}}$.  As a consequence,  by Theorem \ref{thm:mod-upper-proof} we obtain $N(n,n-2)=\frac{n!}{2}$.      
\end{proof}



\subsection{Subcube hitting and improved lower bounds}


Recall that for $m,d\ge 0$, let $g(m,d)$ be the minimum size of a set $H\subseteq\{0,1\}^m$ such that every $d$-dimensional subcube of $Q_m$ contains at least one point of $H$.  Define
\[
   \gamma_d(m):=\frac{g(m,d)}{2^m},
   \qquad
   \gamma_d:=\lim_{m\to\infty}\gamma_d(m).
\]
The limit exists by a standard averaging argument.  Equivalently, $1-\gamma_d$ is the vertex Tur\'an density for avoiding $Q_d$ in the hypercube.

These constants have a substantial literature.  The case $d=2$ was determined by \citet{Kostochka1976} and independently by \citet{JohnsonEntringer1989}, giving $\gamma_2=1/3$.  Vertex Tur\'an problems for hypercubes were developed further by \citet{JohnsonTalbot2010}.  The related covering and polychromatic questions were raised and studied by \citet{AlonKrechSzabo2007}, \citet{Offner2008}, and \citet{OzkahyaStanton2015}.  For general $d$, Alon, Krech, and Szabó \cite{AlonKrechSzabo2007} gave the bounds
\[
   \frac{\log_2(d+2)}{2^{d+2}}
   \le \gamma_d\le \frac1{d+1}.
\]
The upper bound is the simple layer-modulo construction in the hypercube.  It was long natural to ask whether $\gamma_d=1/(d+1)$ always holds.  This fails for large $d$: Ellis, Ivan and Leader \cite{EllisIvanLeader2024} proved $\gamma_d<1/(d+1)$ for all $d\ge 8$, and in fact $\gamma_d\le c^d$ for an absolute constant $c<1$; Ellis, Ivan and Leader \cite{EllisIvanLeader2025} extended the strict inequality to $d=6,7$.  The exact values of $\gamma_d$ remain open for $d\ge 3$.

We now show how these constants enter our chain-cover problem.


\begin{proof}[Proof of Theorem \ref{thm:near-maximal-gamma}]
Let $\calF\subseteq S_n$ be any family of permutations whose maximal chains cover every $(n-t)$-term chain.  We partition $S_n$ into copies of $Q_m$ as follows.  Group the positions into adjacent pairs
\[
   (1,2),(3,4),\ldots,(2m-1,2m),
\]
with one leftover position if $n$ is odd.  Two permutations are in the same orbit if they differ only by independently swapping the two entries inside these adjacent pairs.  Each orbit has size $2^m$ and is naturally a copy of $Q_m$.

Fix one orbit $Q\cong Q_m$.  We claim that $\calF\cap Q$ meets every $d$-dimensional subcube of $Q$.  Indeed, choose any set $I\subseteq[m]$ of size $d=t-1$.  The corresponding $d$-subcube consists of varying precisely the swaps in the adjacent pairs indexed by $I$, while all other orientations are fixed.  Merge those $d$ adjacent pairs into unordered doubleton gaps, and leave all other positions as singleton gaps, in the order prescribed by the orbit.  This produces an ordered gap partition of $[n]$ with
\[
   n-d=n-t+1
\]
gaps, hence an $(n-t)$-term chain.  Its maximal extensions inside the orbit are exactly the vertices of the chosen $d$-subcube.  Since $\calF$ covers every $(n-t)$-term chain, $\calF\cap Q$ must meet this subcube.

Thus, in each orbit,
\[
   |\calF\cap Q|\ge g(m,d).
\]
There are $n!/2^m$ orbits.  Summing over all orbits gives
\begin{equation}
       |\calF|
   \ge
   \frac{n!}{2^m}g(m,d)
   =\gamma_d(m)n!.
\end{equation}

Now let us prove a matching upper bound.
Let $m=n-1$, and let $H\subseteq Q_m$ be a set meeting every
$d$-dimensional subcube of $Q_m$, with
\[
|H|=(\gamma_d+o(1))2^m.
\]
We shall construct a family of maximal chains in $B_n$ from a suitable
coordinate permutation of $H$.

For a permutation $\pi\in S_n$, define its inverse Lehmer code by
\[
L_x(\pi)=
\bigl|\{y<x:\ y\text{ appears after }x\text{ in }\pi\}\bigr|,
\qquad 1\le x\le n.
\]   

Note that the  inverse Lehmer code of a permutation is a standard inversion-table representation of permutations; see, see Knuth~\cite{Knuth11}.
 Applying this, we have
\[
0\le L_x(\pi)\le x-1,
\]
and the map
\[
\pi\mapsto (L_1(\pi),\ldots,L_n(\pi))
\]
is a bijection from $S_n$ to
\[
\{0\}\times\{0,1\}\times\cdots\times\{0,1,\ldots,n-1\}.
\]
Define the parity vector
\[
P(\pi)=
\bigl(L_2(\pi),L_3(\pi),\ldots,L_n(\pi)\bigr)\pmod 2
\in Q_{n-1}.
\]

We first prove the covering property. Let
\[
C:\quad A_1\subset A_2\subset\cdots\subset A_{n-d-1}
\]
be an arbitrary $(n-d-1)$-chain. It determines an ordered gap
partition
\[
B_0,B_1,\ldots,B_{n-d-1}
\]
of $[n]$, where
\[
B_0=A_1,\qquad
B_i=A_{i+1}\setminus A_i\quad (1\le i\le n-d-2),
\qquad
B_{n-d-1}=[n]\setminus A_{n-d-1}.
\]
The maximal chains extending $C$ are obtained by choosing, independently
for each gap $B_i$, an ordering of the elements of $B_i$, while the
order of the gaps themselves is fixed.

Consider one nonempty gap
\[
B={b_1<b_2<\cdots<b_s}.
\]
The order of the elements of $B$ is free. The contributions to
$L_{b_j}(\pi)$ coming from elements outside $B$ are fixed over all
maximal extensions of $C$. The variable part is the internal inverse
Lehmer coordinate of $b_j$ inside the ordering of $B$, which ranges
independently over $0,1,\ldots,j-1$.
Consequently, the parities of
\[
L_{b_2}(\pi),L_{b_3}(\pi),\ldots,L_{b_s}(\pi)
\]
realize all $2^{s-1}$ binary patterns, up to a fixed translation in
$\mathbb F_2^{s-1}$. Applying this independently to all gaps, the set
of parity vectors $P(\pi)$, as $\pi$ ranges over all maximal
extensions of $C$, contains a Boolean subcube of dimension
\[
\sum_i (|B_i|-1),
\]
where the sum is over the nonempty gaps. Since the number of gaps is
$n-d$ and their total size is $n$, this dimension is at least $d$.
Thus the $P$-image of the maximal extensions of $C$ contains a
$d$-dimensional subcube of $Q_{n-1}$.

Every coordinate permutation of $H$ still meets every $d$-dimensional
subcube. Therefore, if $H'$ is any coordinate-permuted copy of $H$,
then every $(n-d-1)$-chain $C$ has a maximal extension $\pi$ with
$P(\pi)\in H'$.
Hence the family
$\mathcal F(H')
=
\{\pi\in S_n:\ P(\pi)\in H'\}$
covers all $(n-d-1)$-chains.

It remains to choose the coordinate permutation so that $\mathcal F(H')$
has the desired size. Let $\pi$ be uniformly random in $S_n$. The
inverse Lehmer coordinates $L_x(\pi)$ are independent, and $L_x(\pi)$
is uniformly distributed on $\{0,1,\ldots,x-1\}$. Hence the coordinates
of $P(\pi)$ are independent Bernoulli random variables. If $x$ is
even, then
\[
\mathbb P(L_x(\pi)\equiv 1\pmod 2)=\frac12,
\]
while if $x$ is odd, then
\[
\mathbb P(L_x(\pi)\equiv 1\pmod 2)=
\frac12-\frac1{2x}.
\]
Let $W=|P(\pi)|$. Then $W$ is a sum of independent Bernoulli random
variables whose parameters differ from $1/2$ by $O(1/x)$. Therefore
\[
d_{\mathrm{TV}}\bigl(W,\operatorname{Bin}(n-1,1/2)\bigr)=o(1),
\] where $d_{\mathrm{TV}}$ denotes the total variance distance for a pair of probability distributions \(\mu,\nu\) on a finite set, i.e., 
\[
   d_{\mathrm{TV}}(\mu,\nu)
   =
   \frac12\sum_x |\mu(x)-\nu(x)|.
\]
This bound of the distance follows from the standard local central limit theorem
for triangular arrays: the mean of $W$ differs from $(n-1)/2$ by only
$O(\log n)=o(\sqrt n)$, while $\operatorname{Var}(W)=\frac{n-1}{4}+O(1)$.

Now choose a uniformly random coordinate permutation $\sigma$ of
$Q_{n-1}$. Write
\[
H_\sigma=\sigma(H).
\]
For each $j$, let
\[
\alpha_j=
\frac{|H\cap \{x\in Q_{n-1}: |x|=j\}|}{\binom{n-1}{j}}
\]
be the density of $H$ on the $j$-th Hamming layer. Averaging over
coordinate permutations gives
\[
\mathbb E_\sigma(
\mathbb P(P(\pi)\in H_\sigma)=
\sum_{j=0}^{n-1}\mathbb P(W=j)\alpha_j.
\]
On the other hand,
\[
\frac{|H|}{2^{n-1}}=
\sum_{j=0}^{n-1}
\mathbb P\bigl(\operatorname{Bin}(n-1,1/2)=j\bigr)\alpha_j.
\]
Since $0\le \alpha_j\le 1$ for all $j$, and since the two weight
distributions have total variation distance $o(1)$, we get
\[
\mathbb E_\sigma(
\mathbb P(P(\pi)\in H_\sigma)
\le
\frac{|H|}{2^{n-1}}+o(1)=
\gamma_d+o(1).
\]
Hence there exists a coordinate permutation $\sigma$ such that, with
$H'=H_\sigma$,
\[
\mathbb P(P(\pi)\in H')\le \gamma_d+o(1).
\]
For this choice,
\[
|\mathcal F(H')|\le 
n!\mathbb P(P(\pi)\in H')
\le
(\gamma_d+o(1))n!.
\]
As shown above, $\mathcal F(H')$ covers all $(n-d-1)$-chains. Thus
\[
N(n,n-d-1)\le(\gamma_d+o(1))n!.
\]
Together with the lower bound, this proves
\[
N(n,n-d-1)=(\gamma_d+o(1))n!.
\]
\end{proof}




\section{Concluding remarks and open questions}\label{sec:open}


For fixed chain length $r$, the main open problem is whether the logarithmic factor in Theorem~\ref{thm:fixed-r} can be removed.

\begin{question}
For every fixed $r\ge 2$, is it true that
\[
   N(n,r)=(1+o(1))M(n,r)?
\]
In particular, is
\[
   N(n,2)=(1+o(1))\binom{n}{m_0,m_1,m_2}
\]
for a balanced triple $(m_0,m_1,m_2)$?
\end{question}

The proof of Theorem~\ref{thm:fixed-r} does not attempt to coordinate choices between balanced intervals; it rounds a fractional cover.  Removing the logarithm would require a much finer construction, closer to a global matching problem.




\medskip

\noindent \textbf{AI declaration.} The authors found all the main ideas of the proof of Theorem \ref{thm:fixed-r}, the lower bound proof of Theorem \ref{thm:near-maximal-gamma} and the proof of Theorem \ref{thm:mod-upper-proof}, they used ChatGPT during drafting and checking of parts of the exposition. After the first version was uploaded to arxiv, M\'at\'e Vizer prompted ChatGPT about possible improvement and it returned the upper bound proof of Theorem \ref{thm:near-maximal-gamma}. We thank M\'at\'e for sharing the result. The authors  take full responsibility for the mathematical content.

\end{document}